\newcommand\partl[2]{\frac{\partial{#1}}{\partial{#2}}}
\newcommand{\bdy}{\partial}
\newcommand{\smoo}{\mathcal{C}}
\newcommand{\bcdot}{\boldsymbol{\cdot}}
\newcommand{\C}{\mathbb{C}}
\newcommand{\R}{\mathbb{R}}
\newcommand{\D}{\mathbb{D}}
\newcommand{\h}{\mathfrak{h}}
\newcommand{\abs}[1]{\left\vert#1\right\vert}
\theoremstyle{plain}
\newtheorem{thm}{Theorem}
\newtheorem{cor}[thm]{Corollary}
\newtheorem{result}[thm]{Result}
\newtheorem{lemma}[thm]{Lemma}
\newtheorem{prop}[thm]{Proposition}
\theoremstyle{definition}
\theoremstyle{remark}
\newtheorem{rmk}[thm]{Remark}
\begin{document}

\title{A note on the smoothness of the Minkowski function}

\author[P.~Haridas]{Pranav Haridas}
\address{Department of Mathematics, Indian Institute of Technology Madras, Chennai 600036, India}
\email{pranav.haridas@gmail.com}
\author[J.~Janardhanan]{Jaikrishnan Janardhanan}
\address{Department of Mathematics, Indian Institute of Technology Madras, Chennai 600036, India}
\email{jaikrishnan@iitm.ac.in}
\subjclass[2010]{Primary 32A07}
\keywords{Minkowski function, balanced domains, quasi-balanced domains, plurisubharmonic defining function}
\thanks{Jaikrishnan Janardhanan is supported by a DST-INSPIRE fellowship from the 
Department of Science and Technology, India.}

\begin{abstract}
  The Minkowski function is a crucial tool used in the study of
  balanced domains and, more generally, quasi-balanced domains in
  several complex variables. If a quasi-balanced domain is
  bounded and pseudoconvex then it is well-known that its Minkowski
  function is plurisubharmonic. In this short note, we prove that under the
  additional assumption of smoothness of the boundary, the Minkowski
  function of a quasi-balanced domain is in fact smooth away from the
  origin.  This allows us to construct a smooth plurisubharmonic
  defining function for such domains. Our result is new even in the
  case of balanced domains.
\end{abstract}

\maketitle

\section{Introduction}

The study of holomorphic mappings between balanced and quasi-balanced
domains pose an interesting challenge. As the automorphism group
contains the circle, such domains possess symmetry that often confers
strong rigidity on holomorphic mappings between these domains.
Indeed, a classical result of Cartan exploits the circle
action to show that any automorphism of a bounded balanced domain
fixing the origin must be linear.  One of the key tools that facilitate the 
study of balanced and quasi-balanced domains is the Minkowski function.
Several generalizations of Cartan's theorem are now known 
(\cite{bell1982proper,berteloot2000cartan,kosinski2014minkowski,yamamori2017origin}),
and many of them use the Minkowski function as a central tool in the
proofs.
The demand of the presence of a circle action is also
not too severe and there are several interesting classes of domains
that are quasi-balanced. For instance, the symmetrized polydisk and
related domains are quasi-balanced domains that have been extensively studied 
using the Minkowski function (see 
\cite{nikolov2006symmetrized,kosinski2011geometry}).

Let $p_1,p_2,\dots,p_n$ be relatively prime positive integers. We say
that a domain $D\subset \C^n$ is \emph{$(p_1,p_2,\dots,p_n)$-balanced
  (quasi-balanced)} if
\begin{align*}
  \lambda \bullet z \in D \ \forall
  \lambda\in \overline{\D} \ \ \forall z \in D,
\end{align*}
where $\overline{\D} $ is the closed unit disk in $\C$ and for
$z = (z_1, z_2, \ldots, z_n) \in D$, we define
$ \lambda \bullet z := (\lambda^{p_1}z_1,\lambda^{p_2}z_2,
\dots,\lambda^{p_n}z_n)$.  If $p_1 = p_2 = \dots = p_n = 1$ above,
then we say $D$ is a \emph{balanced domain} 
(also known as a \emph{complete
circular domain} in the literature).

Given a $(p_1,p_2,\dots,p_n)$-balanced domain $D\subset \C^n$, we
define the Minkowski function $\h_D : \C^n \rightarrow \C$
\begin{align*}
  \h_D(z) := \inf\{t > 0:  \frac{1}{t} \bullet z \in D \}.
\end{align*}
Clearly $D = \{z \in \C^n: \h_D(z) < 1\}$ and
$\h_D(\lambda\bullet z) = |\lambda|\h_D(z)$.  It also turns out that
$\h_D$ is plurisubharmonic if $D$ is additionally psuedoconvex. This
fact has been a crucial ingredient in several results on balanced
domains; see \cite{MR1760787,jarnicki2013invariant}, for instance. 

One natural question that seems to be unanswered (to the best of the
authors' knowledge) in the literature is the following:

\begin{center}
  \emph{Is the Minkowski function of a smoothly bounded psuedoconvex
    quasi-balanced domain a smooth function near the boundary?}
\end{center}
In fact, we found a remark in \cite[p.~190]{graham2003geometric}, with
a reference to Hamada's paper \cite{MR1760787}, stating that the
answer to the above  question is \emph{no} if the domain has
only a $\smoo^1$-boundary. That the Minkowski function of a balanced and
bounded pseudoconvex domain with $\mathcal{C}^1$-smooth
plurisubharmonic defining function is $\mathcal{C}^1$-smooth on
$\C^n \setminus \{0\}$ has already been established in
\cite[Proposition 1]{MR1760787}.  Using the recent
work \cite{MR3557783}, we are able to prove smoothness of the
Minkowski function on $\C^n \setminus \{0\}$ for any smoothly bounded
quasi-balanced domains. The main result of this paper is the following

\begin{thm} \label{THM:MINKSMOOTH} Let $D\subset \C^n$ be a smoothly
  bounded quasi-balanced pseudoconvex domain. Then the Minkowski
  function $\h_D$ is $\mathcal{C}^{\infty}$-smooth on
  $\C^n \setminus \{0\}$. Furthermore, the function
  $r(z) := \h_D(z) -1$ is a plurisubharmonic defining function for $D$.
\end{thm}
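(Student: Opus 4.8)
The plan is to deduce smoothness from the implicit function theorem, after reducing everything to a single transversality statement about the boundary. Fix a smooth defining function $\rho$ for $D$ and, for $z\neq 0$, consider $G(z,t):=\rho\!\left(\tfrac1t\bullet z\right)$ for $t>0$. Since $D$ is bounded and quasi-balanced, the orbit $t\mapsto \tfrac1t\bullet z$ runs from outside $\overline D$ (small $t$) into $D$ and down to the origin (large $t$), meeting $\partial D$ exactly at $t=\h_D(z)$; thus $\h_D(z)$ is characterised as the solution of $G(z,t)=0$. A direct computation gives $\partial_t G(z,t_0)=-\tfrac{1}{t_0}\,(X\rho)(w)$, where $t_0=\h_D(z)$, $w=\tfrac1{t_0}\bullet z\in\partial D$, and $X:=\sum_{j=1}^n p_j\!\left(z_j\,\partial_{z_j}+\overline{z}_j\,\partial_{\overline z_j}\right)$ is the infinitesimal generator of the real scaling $t\mapsto t\bullet z$. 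Hence, once we know $(X\rho)(w)\neq 0$ for every $w\in\partial D$, the implicit function theorem yields a smooth local solution $t=\h_D(z)$, and patching these gives $\h_D\in\mathcal C^\infty(\C^n\setminus\{0\})$.

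So the entire problem reduces to showing that the quasi-radial field $X$ is transverse to $\partial D$. One inequality is easy: for $w\in\partial D$ the function $\phi(t):=\rho(t\bullet w)$ satisfies $\phi(t)<0$ for $t\in(0,1)$ and $\phi(1)=0$, whence $(X\rho)(w)=\phi'(1)\ge 0$. The difficulty is the strict inequality. Writing $Z:=\sum_{j=1}^n p_j z_j\,\partial_{z_j}$, so that $X=Z+\overline Z$, and noting that the circle generator $i(Z-\overline Z)$ is \emph{always} tangent to $\partial D$ (because $e^{i\theta}\bullet w\in\partial D$), one checks that $(X\rho)(w)=0$ forces $(Z\rho)(w)=0$, i.e. the holomorphic field $Z$ is complex-tangent to $\partial D$ at $w$. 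Equivalently, the analytic disc $\psi_w(\lambda):=\lambda\bullet w$, which satisfies $\psi_w(\D)\subset D$ and $\psi_w(\partial\D)\subset\partial D$, would be tangent to $\partial D$ at the boundary point $w=\psi_w(1)$. Ruling this out is the main obstacle, and it is exactly here that pseudoconvexity, smoothness of $\partial D$, and the recent work \cite{MR3557783} must enter: the intended mechanism is a Hopf-type boundary-point lemma applied to (a plurisubharmonic defining function restricted to) the disc $\psi_w$, for which the relevant composition $u(\lambda):=\rho(\psi_w(\lambda))$ is subharmonic, negative on $\D$, and zero on $\partial\D$, so that its outward normal derivative $\phi'(1)=(X\rho)(w)$ is strictly positive. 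The circular symmetry is essential, since it is what produces the family of boundary discs $\psi_w$ on which to run this argument; for a merely $\mathcal C^1$ boundary the normal derivative can degenerate, consistent with the failure noted in \cite{graham2003geometric}.

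Granting the transversality $(X\rho)>0$ on $\partial D$, the implicit function theorem gives $\h_D\in\mathcal C^\infty(\C^n\setminus\{0\})$ as above. It then remains to see that $r:=\h_D-1$ is a plurisubharmonic defining function. Plurisubharmonicity is already known from pseudoconvexity (as recorded in the introduction), so only $dr\neq 0$ on $\partial D$ needs checking; but the homogeneity $\h_D(t\bullet w)=t\,\h_D(w)=t$ gives $(X r)(w)=(X\h_D)(w)=1\neq 0$ for $w\in\partial D$, hence $dr(w)\neq 0$. Thus $r$ is a smooth plurisubharmonic defining function for $D$, completing the proof once the transversality step is secured.
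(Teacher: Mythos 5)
Your overall architecture is the same as the paper's: you set up $G(z,t)=\rho(\tfrac1t\bullet z)$, reduce smoothness of $\h_D$ via the implicit function theorem to the nonvanishing of $\partial_t G$ at $t=\h_D(z)$, and your observation that $(X\rho)(w)=0$ together with the tangency of the circle generator $i(Z-\overline Z)$ forces $(Z\rho)(w)=0$, i.e.\ that $(p_1w_1,\dots,p_nw_n)$ lies in $T^{\C}_w\bdy D$, is exactly the paper's computation. Your final step is actually cleaner than the paper's: you get $dr\neq 0$ on $\bdy D$ from the Euler-type homogeneity $(X\h_D)(w)=1$, whereas the paper invokes Hopf's lemma (Lemma~\ref{LEM:HOPF}) for this. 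However, there is a genuine gap at precisely the step you yourself flag as ``the main obstacle'': ruling out the complex tangency $(Z\rho)(w)=0$. Your intended mechanism---a Hopf-type boundary lemma applied to $u(\lambda):=\rho(\psi_w(\lambda))$---requires $u$ to be subharmonic, i.e.\ requires $\rho$ to be plurisubharmonic along the disc. But a smoothly bounded pseudoconvex domain need not admit any smooth plurisubharmonic defining function (worm-type examples), and producing one is exactly the second assertion of the theorem being proved, so this argument is circular.

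Nor can the gap be patched with the one plurisubharmonic function that is freely available, namely $\h_D-1$: its restriction to the disc $\psi_w$ is $|\lambda|-1$, whose unit normal derivative is trivially $1$ and says nothing about $\rho$. To convert that into $(X\rho)(w)>0$ you would need $\mathrm{dist}(t\bullet w,\bdy D)\gtrsim(1-t)$, which is equivalent to the transversality you are trying to prove; what Hopf's lemma (Lemma~\ref{LEM:HOPF}) applied to $\h_D-1$ actually yields is the reverse inequality $\mathrm{dist}(t\bullet w,\bdy D)\le(1-t)/c$, which holds trivially along any ray and is consistent with tangency. The paper closes this gap by a different, non-Hopf input: Result~\ref{THM:NZZ} (Theorem 2.7 of \cite{MR3557783}) states that a compact Lie group acting linearly on $\C^n$ with $\mathcal{O}(\C^n)^G=\C$ acts transversely on every $G$-invariant smoothly bounded pseudoconvex domain containing the origin; combined with the elementary fact that $\mathcal{O}(\C^n)^{\mathbb{S}^1}=\C$ for the weighted circle action $\lambda\bullet z$, this gives exactly $(ip_1\xi_1,\dots,ip_n\xi_n)\notin T^{\C}_{\xi}\bdy D$ (Corollary~\ref{COR:NZZ}), which is the statement your reduction needs. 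You cite \cite{MR3557783} but never state or deploy this theorem, and your proposed substitute for it fails; until that transversality is supplied by this result (or an independent proof of it), the proof is incomplete.
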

  
\begin{rmk}
  By a smoothly bounded domain, we shall mean a bounded
  domain whose boundary is $\smoo^\infty$-smooth.
\end{rmk} 

\begin{rmk}
  The analogue of the above result for convex domains is well-known. The reader
  is referred to \cite[Section~6.3]{krantz1999geometry} for details
\end{rmk}

\section{Supporting results}

Before we give the proof of Theorem~\ref{THM:MINKSMOOTH}, we first give a
brief overview of the necessary tools.

We shall now consider the setting in \cite[p.~518, p.~523]{MR3557783}.
Let $D \subset \C^n$ be a smoothly bounded domain and let 
$G\subset \textsf{Aut}(D)\cap \mathcal{C}^{\infty} (\overline{D})$ be a
compact Lie subgroup of $\textsf{Aut}(D)$ in the compact open
topology. Consider a continuous representation
$\rho : G \rightarrow GL(\C^n)$ of $G$ and the set
\begin{align*}
  \mathcal{O}(\C^n)^G :=
  \{f\in\mathcal{O}(\C^n) : f\circ \rho(g) = f
  \text{ for all } g\in G\}
\end{align*}
called the set of $G$-invariant entire functions.

A domain $D$ is said to be $G$-invariant if $\rho(g)\cdot D = D$ for
all $g\in G$.  We will say that $G$ acts transversely on $D$ if for
each $z_0 \in \bdy D$ the image of the tangent map
$d\Psi_{z_0} : T_eG \rightarrow T_{z_0}\bdy D$ associated to the map
$\Psi_{z_0}: G \rightarrow \bdy D$ given by $g \mapsto g(z_0)$, is
not contained in $T_{z_0}^{\C} \bdy D$, the complex tangent space to
$\bdy D$ at $z_0$. We have the following

\begin{result}[Theorem 2.7 in \cite{MR3557783}]\label{THM:NZZ}
  Let $G$ be a compact Lie group, which acts linearly on $\C^n$ with
  $\mathcal{O}(\C^n)^G = \C$. If $ D$ is a $G$-invariant smoothly bounded
  pseudoconvex domain in $\C^n$ that contains the origin, then $G$ acts
  transversely on $D$.
\end{result}

\noindent Consider the representation of the compact lie group
$\mathbb{S}^1$ given by
\begin{align*}
  \rho(\lambda)(z) =  \lambda\bullet z \text{ where }
  \lambda \in \mathbb{S}^1.
\end{align*}

\begin{prop}
  Under the above action, $\mathcal{O}(\C^n)^{\mathbb{S}^1} = \C$.
\end{prop}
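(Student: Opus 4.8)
The plan is to expand an arbitrary invariant entire function as a globally convergent power series and read off, monomial by monomial, the constraint that invariance imposes. Concretely, suppose $f \in \mathcal{O}(\C^n)^{\mathbb{S}^1}$ and write $f(z) = \sum_{\alpha} c_\alpha z^\alpha$, where $\alpha = (\alpha_1, \ldots, \alpha_n) \in \N^n$ ranges over multi-indices and the series converges uniformly on compact subsets of $\C^n$. Applying the action yields $f(\lambda \bullet z) = \sum_\alpha c_\alpha \lambda^{p_1 \alpha_1 + \cdots + p_n \alpha_n} z^\alpha$; to lighten notation I abbreviate the weight as $\langle p, \alpha\rangle := p_1 \alpha_1 + \cdots + p_n \alpha_n$.

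First I would invoke the uniqueness of the Taylor coefficients of an entire function. Since $f(\lambda \bullet z) = f(z)$ for every $\lambda \in \mathbb{S}^1$ and every $z \in \C^n$, comparing the coefficient of $z^\alpha$ on both sides forces $c_\alpha \lambda^{\langle p, \alpha\rangle} = c_\alpha$ for all $\lambda$ on the unit circle. This is the one place where a little care is warranted, since one is comparing coefficients simultaneously for all $\lambda$; it is justified by the uniqueness of the power series expansion of $f$.

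The decisive step is then purely arithmetic. If $c_\alpha \neq 0$, then $\lambda^{\langle p, \alpha\rangle} = 1$ for every $\lambda \in \mathbb{S}^1$, and this holds only when the exponent $\langle p, \alpha\rangle$ vanishes. Here I would use the standing hypothesis that $p_1, \ldots, p_n$ are positive integers together with $\alpha_i \geq 0$: the weight $\langle p, \alpha\rangle$ is a nonnegative integer that equals $0$ precisely when $\alpha = 0$. Consequently the only surviving coefficient is $c_0$, so $f \equiv c_0$ is constant, which gives $\mathcal{O}(\C^n)^{\mathbb{S}^1} = \C$.

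I do not anticipate a genuine obstacle: the statement amounts to the observation that the strict positivity of the weights $p_i$ prevents any nonconstant monomial from being circle-invariant. (Note that relative primality of the $p_i$ plays no role here; only positivity is used.) The sole technical ingredients are the uniqueness of the Taylor expansion of an entire function and the elementary fact that $\lambda^m \equiv 1$ on $\mathbb{S}^1$ forces $m = 0$.
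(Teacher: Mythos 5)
Your proof is correct, but it takes a genuinely different route from the paper's. The paper argues by slicing along orbits: for fixed $z \in \C^n$ it considers the one-variable entire function $g_z(\lambda) := f(\lambda \bullet z)$, which is constant on $\mathbb{S}^1$; by the identity theorem $g_z$ is constant on all of $\C$, and since $0 \bullet z = 0$ (this is where positivity of the $p_i$ enters there), $f(z) = g_z(0) = f(0)$. You instead expand $f$ in its global power series and compare Taylor coefficients, noting that the monomial $z^\alpha$ transforms by the character $\lambda^{\langle p, \alpha \rangle}$ and that the weight $\langle p, \alpha \rangle$ is a positive integer unless $\alpha = 0$. Every step of yours is sound: for each fixed $\lambda \in \mathbb{S}^1$ the substituted series is the Taylor expansion at the origin of the entire function $z \mapsto f(\lambda \bullet z)$, so uniqueness of coefficients applies $\lambda$ by $\lambda$, and $\lambda^m \equiv 1$ on $\mathbb{S}^1$ with $m \in \N$ indeed forces $m = 0$. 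As for what each approach buys: the paper's slicing argument is shorter, coordinate-free, and yields as a by-product invariance under the full closed disk, $f(\lambda \bullet z) = f(z)$ for all $\abs{\lambda} \leq 1$; your coefficient argument is more structural, since it exhibits the isotypic decomposition of $\mathcal{O}(\C^n)$ under the circle action and makes fully explicit that only positivity of the weights $p_i$ is used --- your parenthetical that relative primality plays no role is accurate, and the paper's proof uses positivity in the equivalent guise $0 \bullet z = 0$.
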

\begin{proof}
  Consider $f\in \mathcal{O}(\C^n)$ such that
  $f(\lambda\bullet z) = f(z)$ for every $\lambda \in \mathbb{S}^1$
  and for all $z\in D$. Fix $z\in \C^n$ and define a function
  $g_{z} : \C \rightarrow \C$ given by
  $g_{z}(\lambda) = f(\lambda\bullet z)$. Then $g$ is a holomorphic
  function that is constant on $\mathbb{S}^1$ and hence
  $g_z \equiv g(0)$. Since our choice of $z$ was arbitrary, we have
  $f(z) = f(0)$.  The constant functions clearly belong to
  $\mathcal{O}(\C^n)^{\mathbb{S}^1}$.
\end{proof}

That $D$ is $\mathbb{S}^1$-invariant is a direct consequence of the
fact that $D$ is $(p_1,p_2,\dots,p_n)$-balanced.  Thus in our case, we
can conclude the following
\begin{cor} \label{COR:NZZ} Under the hypotheses on $D$ as in 
Theorem~\ref{THM:MINKSMOOTH}, for each
  $\xi = (\xi_1,\dots, \xi_n) \in \bdy D$, the vector
  \begin{align*}
    (ip_1\xi_1,\dots,ip_n\xi_n) \not \in T^\C_{\xi} \bdy D.
  \end{align*}
\end{cor}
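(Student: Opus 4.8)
The plan is to derive Corollary~\ref{COR:NZZ} by applying Result~\ref{THM:NZZ} to the specific $\mathbb{S}^1$-action $\rho(\lambda)(z)=\lambda\bullet z$ and then computing explicitly the image of the tangent map that transversality forbids from lying in the complex tangent space. Let me think about what each piece gives.

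The setup: $D$ is $(p_1,\dots,p_n)$-balanced, hence $\mathbb{S}^1$-invariant under the action above, which is linear. The preceding Proposition establishes $\mathcal{O}(\C^n)^{\mathbb{S}^1}=\C$, and $D$ is smoothly bounded, pseudoconvex, and contains the origin (the origin lies in every quasi-balanced domain since $0\bullet z = 0 \in D$). So all hypotheses of Result~\ref{THM:NZZ} are met, and we conclude that $\mathbb{S}^1$ acts transversely on $D$.

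Now I would unwind the definition of transversality at a boundary point $\xi\in\bdy D$. The relevant map is $\Psi_\xi:\mathbb{S}^1\to\bdy D$, $\lambda\mapsto\rho(\lambda)(\xi)=\lambda\bullet\xi$, and transversality says that $d\Psi_\xi(T_e\mathbb{S}^1)\not\subset T^\C_\xi\bdy D$. Since $\mathbb{S}^1$ is one-dimensional, its tangent space at the identity is spanned by a single vector; parametrizing $\lambda=e^{i\theta}$ with the generator $\partial_\theta$ at $\theta=0$, I compute
\begin{align*}
  d\Psi_\xi(\partial_\theta)\big|_{\theta=0}
  = \frac{d}{d\theta}\Big|_{\theta=0}
    \left(e^{ip_1\theta}\xi_1,\dots,e^{ip_n\theta}\xi_n\right)
  = (ip_1\xi_1,\dots,ip_n\xi_n).
\end{align*}
Thus the image $d\Psi_\xi(T_e\mathbb{S}^1)$ is precisely the real span of the vector $(ip_1\xi_1,\dots,ip_n\xi_n)$, and the transversality conclusion $d\Psi_\xi(T_e\mathbb{S}^1)\not\subset T^\C_\xi\bdy D$ becomes exactly the assertion that $(ip_1\xi_1,\dots,ip_n\xi_n)\notin T^\C_\xi\bdy D$, as claimed.

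The proof is essentially a verification that hypotheses match and a short tangent-vector computation, so there is no deep obstacle; the only point requiring care is the small bookkeeping at the interface between the abstract transversality statement and the explicit vector. Specifically I would want to confirm that the origin lies in $D$ (needed for Result~\ref{THM:NZZ}) and to be careful that the nonzero scalar $i$ in each component does not accidentally pull the vector into the complex tangent space for trivial reasons — but since $T^\C_\xi\bdy D$ is a complex subspace and the conclusion is about the single real vector $d\Psi_\xi(\partial_\theta)$, multiplying the generator by $i$ merely reflects the standard complex structure and does not weaken the statement. I would then simply state the corollary as the translation of transversality into coordinates.
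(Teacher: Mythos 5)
Your proposal is correct and follows essentially the same route as the paper: apply Result~\ref{THM:NZZ} to the $\mathbb{S}^1$-action (using the preceding Proposition for $\mathcal{O}(\C^n)^{\mathbb{S}^1}=\C$), compute $d\Psi_\xi$ at the identity to get $(ip_1\xi_1,\dots,ip_n\xi_n)$, and use one-dimensionality of $T_e\mathbb{S}^1$ to equate transversality with the stated non-membership in $T^\C_\xi\bdy D$. Your explicit verification that $0\in D$ is a small point the paper leaves implicit, but otherwise the two arguments coincide.
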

\begin{proof}
  With $\Psi_{\xi}: \mathbb{S}^1 \rightarrow \bdy D$ given by
  $\Psi_{\xi}(\lambda) = \lambda\bullet \xi$, the evaluation of the
  derivative map $d\Psi_{\xi}(1) = (ip_1\xi_1,\dots,ip_n\xi_n) 
  \in T_{\xi} \bdy D$. By Result~\ref{THM:NZZ},
  $d\Psi_{\xi}(1)\notin T^\C_{\xi} \bdy D$ as otherwise
  $d\Psi_{\xi}(T_e\mathbb{S}^1)\subset T^\C_{\xi} \bdy D$.
\end{proof}

We will use the following version of Hopf's lemma in the proof of Theorem~\ref{THM:MINKSMOOTH}.

\begin{lemma}[Lemma 3, p.~177, \cite{khenkin89several}]\label{LEM:HOPF}
  Let $D \subset \C^n$ be a smoothly bounded domain and let $r$ be a negative
  plurisubharmonic function defined on $D$. Then there exists a constant 
  $c > 0$ such that $\abs{r(z)} > c \cdot dist(z, \bdy D)$.
\end{lemma}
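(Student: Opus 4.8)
The plan is to prove this Hopf-type lower bound by combining the uniform interior ball condition of a smoothly bounded domain with a radial subharmonic barrier, after reducing the global estimate to an estimate valid only near $\bdy D$. Since a plurisubharmonic function is in particular subharmonic as a function on $\R^{2n}$, I will use only subharmonicity of $r$ together with the maximum principle; plurisubharmonicity plays no further role. First I would fix $\delta>0$ and dispose of the interior region $K_\delta:=\{z\in D:\mathrm{dist}(z,\bdy D)\ge \delta\}$ separately: it is a compact subset of $D$ on which the upper semicontinuous function $r$ attains a negative maximum $-m'<0$, so $\abs{r(z)}\ge m'\ge (m'/R)\,\mathrm{dist}(z,\bdy D)$ there, where $R=\mathrm{diam}(D)$. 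The real content is therefore the estimate in the boundary collar.

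For the collar estimate I would invoke the uniform interior ball condition: since $\bdy D$ is smooth, there is $\rho>0$, which we may take smaller than the reach of $\bdy D$, such that for each $\zeta\in\bdy D$ the open ball $B_\zeta:=B(a_\zeta,\rho)$ with centre $a_\zeta=\zeta-\rho\,\nu(\zeta)$ satisfies $B_\zeta\subset D$ and $\zeta\in\bdy B_\zeta$, and moreover every $z$ with $d:=\mathrm{dist}(z,\bdy D)<\rho$ has a nearest boundary point $\zeta$ with $z-\zeta$ normal, so that $z=\zeta-d\,\nu(\zeta)$ and $\abs{z-a_\zeta}=\rho-d$. On the annulus $\Omega_\zeta:=\{\rho/2<\abs{x-a_\zeta}<\rho\}\subset B_\zeta$ I would use the radial barrier
\[
  w(x)=e^{-\alpha\abs{x-a_\zeta}^2}-e^{-\alpha\rho^2},
\]
a direct computation giving $\Delta w=e^{-\alpha\abs{x-a_\zeta}^2}\bigl(4\alpha^2\abs{x-a_\zeta}^2-4n\alpha\bigr)>0$ on $\Omega_\zeta$ once $\alpha\ge 4n/\rho^2$, so $w$ is subharmonic there, with $w>0$ on $\Omega_\zeta$, $w=0$ on the outer sphere, and $w=w_0:=e^{-\alpha\rho^2/4}-e^{-\alpha\rho^2}>0$ on the inner sphere. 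Then $r+\eps w$ is subharmonic on $\Omega_\zeta$ for every $\eps>0$, and I would bound it above by the maximum principle: on the outer sphere $w$ vanishes and $\limsup r\le 0$ (at interior points because $r<0$, and at any contact point of $\bdy B_\zeta$ with $\bdy D$ because $w\to 0$ while $r<0$ forces $\limsup r\le 0$), while on the inner sphere $r\le \sup_{K_{\rho/2}}r=:-m<0$. Choosing $\eps=m/w_0$ makes all boundary limsups $\le 0$, so $r+\eps w\le 0$, that is $\abs{r}\ge \eps\,w$ on $\Omega_\zeta$.

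Finally I would convert this into the distance estimate. For $z$ in the collar with $d<\rho/2$ we have $z\in\Omega_\zeta$ and $\abs{z-a_\zeta}=\rho-d$, whence
\[
  \abs{r(z)}\ \ge\ \eps\bigl(e^{-\alpha(\rho-d)^2}-e^{-\alpha\rho^2}\bigr)\ =\ \eps\,e^{-\alpha\rho^2}\bigl(e^{\alpha d(2\rho-d)}-1\bigr)\ \ge\ \eps\,\alpha\rho\,e^{-\alpha\rho^2}\,d,
\]
using $e^t-1\ge t$ and $2\rho-d\ge\rho$; this is a bound $\abs{r(z)}\ge c_0\,\mathrm{dist}(z,\bdy D)$ with $c_0=\eps\alpha\rho e^{-\alpha\rho^2}$. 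Taking $\delta=\rho/2$ (so that $K_\delta=K_{\rho/2}$ and $m'=m$) and $c=\tfrac12\min\{c_0,m'/R\}$ then yields the strict inequality $\abs{r(z)}> c\,\mathrm{dist}(z,\bdy D)$ for every $z\in D$. The step I expect to be the main obstacle is securing the uniformity of the constant over all boundary points: the radius $\rho$ and the barrier exponent $\alpha$ are already uniform by smoothness, and the crucial point is that all inner comparison spheres $\{\abs{x-a_\zeta}=\rho/2\}$ lie in the single fixed compact shell $K_{\rho/2}$, on which $\sup r<0$; this renders $m$, and hence $\eps$ and $c_0$, independent of $\zeta$. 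A secondary subtlety, also handled above, is that $r$ is not assumed to extend continuously to $\bdy D$, so the comparison must be run with $\limsup$'s at the contact points rather than with genuine boundary values.
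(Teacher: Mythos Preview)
Your argument is correct and self-contained: it is the classical Hopf barrier comparison, run in a uniform interior ball and made uniform over $\bdy D$ by noting that every inner comparison sphere lies in the fixed compact shell $K_{\rho/2}$. Note, however, that the paper does not supply any proof of this lemma; it is simply quoted from \cite{khenkin89several} as an auxiliary tool invoked once at the end of the proof of Theorem~\ref{THM:MINKSMOOTH}. There is therefore no ``paper's own proof'' to compare against, and what you have written would stand in for the citation.

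Two minor points on the write-up, neither affecting validity. First, the strict inequality $\Delta w>0$ on the annulus requires $\alpha>4n/\rho^2$ rather than $\alpha\ge 4n/\rho^2$; since you only need $r+\eps w$ subharmonic for the maximum principle, non-strict subharmonicity of $w$ already suffices, but the inequality as stated needs the strict bound on $\alpha$. Second, the interior step tacitly assumes $r\not\equiv-\infty$ so that $\sup_{K_{\rho/2}}r$ is a genuine negative real number; this is the intended meaning of ``negative plurisubharmonic function'' (the estimate being vacuous otherwise), and is worth a word.
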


\section{Proof of Theorem \ref{THM:MINKSMOOTH}}
\noindent Let $\psi$ be a defining function for $D$.  Consider the map
$g \in \mathcal{C}^{\infty}(\C^n\times \R\setminus \{0\})$ given by
\begin{align*}
  g(z,t) : = \psi\left(\frac{1}{t}\bullet z\right)
\end{align*}
Observe that $g(z, \h_D(z)) = 0$. Let us fix a point
$z_0\in \C^n\setminus \{0\}$.  We shall show that
$\partl{g}{t}|_{(z_0,\h_D(z_0))} \neq 0$.

\medskip

Let us denote the coordinates of $z_0$ by $(z_1, \ldots, z_n)$. Then
the point $\xi = (\xi_1, \ldots, \xi_n)$ defined to be
$\frac{1}{\h_D(z_0)}\bullet z_0$
belongs to $\bdy D$.  A direct calculation gives us that
\begin{align*}
  \partl{g}{t}|_{(z_0, \h_D(z_0))} =
  \frac{-1}{\h_D(z_0)}
  \begin{pmatrix}
    \partl{\psi}{z_1}, \ldots, \partl{\psi}{z_n}
  \end{pmatrix}|_{\xi} \bcdot
  \begin{pmatrix}
    p_1\xi_1 \\
    \vdots   \\
    p_n\xi_n
  \end{pmatrix}
\end{align*}
If $\partl{g}{t}|_{(z_0,\h_D(z_0))} = 0$, then
$\left(p_1\xi_1, \ldots, p_n\xi_n\right) \in T_{\xi}\bdy D$. 
Consider the curve \[\gamma(\theta) = e^{i\theta}\bullet\xi\]
in $\bdy D$. Then the corresponding tangent vector
$\left(ip_1\xi_1, \ldots, ip_n\xi_n\right) \in T_{\xi}\bdy D$ and
hence is in the complex tangent space $T_{\xi}^{\C}\bdy D$ which is a
contradiction to Corollary~\ref{COR:NZZ}.  Now by the implicit
function theorem, $\h_D$ is $\mathcal{C}^{\infty}$-smooth on
$\C^n \setminus \{0\}$.

%


\medskip

We shall now prove that $r$ is a defining function. We are left with
observing that $dr \neq 0$ on $\bdy D$. It is easy to see that the
normal derivative at every point on the boundary $\bdy D$ is bounded
below by the constant $c$ by an application of Hopf's lemma
(Lemma~\ref{LEM:HOPF}). Hence $dr \neq 0$ on $\bdy D$. 
\qed
\medskip

Our result implies that the main results 
in \cite{MR1760787,hamada2001convex} on balanced domains with $\smoo^1$-smooth
plurisubharmonic defining function also hold for smoothly bounded balanced
pseudoconvex domains.

\bibliographystyle{amsalpha} \bibliography{minkowski}
\end{document}